\documentclass[letterpaper]{amsart}

\newtheorem{theorem}{Theorem}[section]

\newtheorem{proposition}[theorem]{Proposition}

\theoremstyle{definition}

\theoremstyle{remark}
\newtheorem{remark}[theorem]{Remark}

\usepackage[pdftex]{graphicx}
%%%%%%%%%%
\usepackage{color}
%%%%%%%%%
\usepackage{amsmath}
\usepackage{amsfonts}
\usepackage{amssymb}
\usepackage{epsfig}
\usepackage{rotating}
\usepackage{graphics}
\newcommand{\be}{\begin{equation}}

\newcommand{\ee}{\end{equation}}

\newcommand{\tde}{{\stackrel{\scriptscriptstyle{DE}}{T}}\phantom{}}

%{\gotyk}

%\newcommand{\bz}{\bar{\zeta}}

%\newcommand{\w}{\omega}

\newcommand{\dz}{\wedge}

\newcommand{\ba}{\begin{array}}

\newcommand{\ea}{\end{array}}

\newcommand{\beq}{\begin{eqnarray}}

\newcommand{\eeq}{\end{eqnarray}}

\newtheorem{lm}{lemma}

\newtheorem{thee}{theorem}

\newtheorem{proo}{proposition}

\newtheorem{co}{corollary}

\newtheorem{rem}{remark}

\newtheorem{deff}{definition}

\newcommand{\bd}{\begin{deff}}

\newcommand{\ed}{\end{deff}}

\newcommand{\bl}{\begin{lm}}

\newcommand{\el}{\end{lm}}

\newcommand{\bp}{\begin{proo}}

\newcommand{\ep}{\end{proo}}

\newcommand{\bt}{\begin{thee}}

\newcommand{\et}{\end{thee}}

\newcommand{\bc}{\begin{co}}

\newcommand{\ec}{\end{co}}

\newcommand{\brm}{\begin{rem}}

\newcommand{\erm}{\end{rem}}

\newcommand{\der}{{\rm d}}

\hyphenation{Pa-wel}

\hyphenation{Nu-row-ski}

\hyphenation{And-rzej}

\hyphenation{Traut-man}

\hyphenation{Je-rzy}

\hyphenation{Le-wan-dow-ski}

\hyphenation{Car-tan}

\hyphenation{Car-tan-Pet-rov-Pen-rose}

\hyphenation{Pen-rose}

\hyphenation{or-tho-go-nal}

\hyphenation{comp-lex}

\hyphenation{Pet-rov}

\hyphenation{Euc-lid-ean}

\hyphenation{ge-om-etry}

\hyphenation{Rie-man-nian}

\hyphenation{Ein-stein}

\hyphenation{Ka-te-dra}

\hyphenation{Me-tod}

\hyphenation{Ma-te-ma-tycz-nych}

\hyphenation{Fi-zy-ki}

\hyphenation{Uni-wer-sy-tet}

\hyphenation{War-szaw-ski}

\hyphenation{War-sza-wa}

%%******************************** Rod's preamble below **********\\

\usepackage{t1enc}
\def\frak{\mathfrak}

\newcommand{\newc}{\newcommand}

\let\ccdot\cdot
\def\cdot{\hbox to 2.5pt{\hss$\ccdot$\hss}}

\newc{\aR}{\mbox{\boldmath{$ R$}}}
\newc{\aS}{\mbox{\boldmath{$ S$}}}
\newc{\aT}{\mbox{\boldmath{$ T$}}}
\newc{\aW}{\mbox{\boldmath{$ W$}}}

\newc{\aK}{\mbox{\boldmath{$ K$}}}
\newc{\aL}{\mbox{\boldmath{$ L$}}}
%******************** Environments **********************************

%**************For D-operators and AHS section **********************

\usepackage{amssymb}
\usepackage{amscd}

%************** Truly new 4 here **********************************

%\newc{\aR}{\mbox{\boldmath{$ R$}}}

\newcommand{\hook}{\raisebox{-0.35ex}{\makebox[0.6em][r]
{\scriptsize $-$}}\hspace{-0.15em}\raisebox{0.25ex}{\makebox[0.4em][l]{\tiny
 $|$}}}

\newcommand{\bma}{\begin{pmatrix}}
\newcommand{\ema}{\end{pmatrix}}

%************************* Old News! *********************************

% Operators

        % scriptsize ambient
                                                           % fundamental field

% Other

                          %Probably change later

                   % Kill these if unused

\newc{\obstrn}[2]{B^{#1}_{#2}}

%\newcommand{\rpl}                         % +>
%{\mbox{$
%\begin{picture}(12.7,8)(-.5,-1)
%\put(0,0){$+$}
%\put(4.2,0){$>$}
%\end{picture}$}}

% Layout

\newcommand{\rpl}                         % +) or <+
{\mbox{$
\begin{picture}(12.7,8)(-.5,-1)
\put(0,0.2){$+$}
\put(4.2,2.8){\oval(8,8)[r]}
\end{picture}$}}

\newcommand{\lpl}                         % (+ or +>
{\mbox{$
\begin{picture}(12.7,8)(-.5,-1)
\put(2,0.2){$+$}
\put(6.2,2.8){\oval(8,8)[l]}
\end{picture}$}}

\usepackage{ifthen}

\newc{\tensor}[1]{#1}
\newc{\Mvariable}[1]{\mbox{#1}}
\newc{\down}[1]{{}_{#1}}
\newc{\up}[1]{{}^{#1}}

%*************more new************************************

%\newc{\tensor}[1]{#1}
%
%\newc{\Mvariable}[1]{\mbox{#1}}
%
%\newc{\down}[1]{{}_{
%\ifthenelse{\equal{#1}{;}}{|}{#1}}}
%
%\newc{\up}[1]{{}^{#1}}
%\newc{\C}{C}
\newc{\JulyStrut}{\rule{0mm}{6mm}}
\newc{\midtenPan}{\mbox{\sf S}}
\newc{\midten}{\mbox{\sf T}}
\newc{\midtenEi}{\mbox{\sf U}}
\newc{\ATen}{\mbox{\sf E}}
\newc{\BTen}{\mbox{\sf F}}
\newc{\CTen}{\mbox{\sf G}}

%************************************************************

% JAN'S PLAIN MACROS
%***************************************
\def\sideremark#1{\ifvmode\leavevmode\fi\vadjust{\vbox to0pt{\vss% the remark
 \hbox to 0pt{\hskip\hsize\hskip1em%                          will appear only
 \vbox{\hsize3cm\tiny\raggedright\pretolerance10000%          on the side
 \noindent #1\hfill}\hss}\vbox to8pt{\vfil}\vss}}}%
                                                   %          in 3cm

                                                   %          wide box
                                                   %

%%%%%%%%%%%%%
%\usepackage[notcite,notref]{showkeys}
%%%%%%%%%%%%%

%%%%%%%%%%%%%%%%%%%%%%%%MB

%numeracja listy {i,ii,iii,...}
\newcounter{romenumi}
\newcommand{\labelromenumi}{(\roman{romenumi})}

%numeracja lemma.{a,b,c}
%\newcounter{lemenumi}
%\newcommand{\labelemenumi}{\arabic{section}.\arabic{lemma}-\alph{lemenumi}.}
%\newcommand{\labelemenumi}{(\alph{lemenumi})}
%\renewcommand{\thelemenumi}{\arabic{section}.\arabic{lemma}-\alph{lemenumi}}
%\renewcommand{\thelemenumi}{(\alph{lemenumi})}
%\newenvironment{lemlist}{\begin{list}{\labelemenumi}{\usecounter{lemenumi}}}{\end{list}}

%%%%%%%%%%%%%%%%%%%%%%%%%%%%%%%%%

\begin{document}
\title{Is dark energy meaningless?}
\vskip 1.truecm
\author{Pawe\l~ Nurowski} \address{Instytut Fizyki Teoretycznej,
Uniwersytet Warszawski, ul. Hoza 69, Warszawa, Poland}
\email{nurowski@fuw.edu.pl} %\thanks{This research was supported by
                            %the Polish grant}

\date{\today}

\begin{abstract}
We show that there are isometrically nonequivalent Robertson-Walker
metrics which have the same set of geodesics. While one of these
metrics satisfies the Einstein equations of pure dust without a 
cosmoological constant, all the other describe pure dust with
additional energy momentum tensor of cosmological constant type. Since
each of these metrics have the same geodesics it is not clear how to
distinguish experimentally between the Universes whose energy momentum
tensor includes or not the cosmological constant type term.

%\vskip5pt\centerline{\small\textbf{MSC classification}: ...}\vskip15pt
%\vskip5pt\centerline{\small\textbf{Key words}: ...}\vskip15pt
\end{abstract}
\maketitle
%*************
%\tableofcontents
\newcommand{\bbS}{\mathbb{S}}
\newcommand{\bbR}{\mathbb{R}}
\newcommand{\sog}{\mathbf{SO}}
\newcommand{\slg}{\mathbf{SL}}
\newcommand{\glg}{\mathbf{GL}}
\newcommand{\og}{\mathbf{O}}
\newcommand{\soa}{\frak{so}}
\newcommand{\sla}{\frak{sl}}
\newcommand{\sua}{\frak{su}}
\newcommand{\dr}{\mathrm{d}}
\newcommand{\sug}{\mathbf{SU}}
\newcommand{\gat}{\tilde{\gamma}}
\newcommand{\Gat}{\tilde{\Gamma}}
\newcommand{\thet}{\tilde{\theta}}
\newcommand{\Thet}{\tilde{T}}
\newcommand{\rt}{\tilde{r}}
\newcommand{\st}{\sqrt{3}}
\newcommand{\kat}{\tilde{\kappa}}
\newcommand{\kz}{{K^{{~}^{\hskip-3.1mm\circ}}}}
\newcommand{\bv}{{\bf v}}
\newcommand{\di}{{\rm div}}
\newcommand{\curl}{{\rm curl}}
\newcommand{\cs}{(M,{\rm T}^{1,0})}
\newcommand{\tn}{{\mathcal N}}
%*************
%\section{}
To interpret the cosmological data one has to assume a model of
space-time, which according to the current paradigm, is a
4-dimensional manifold $M$ equipped with the Robertson-Walker metric
$g$ given by 
\be
g=-\der t^2+R^2\frac{\der x^2+\der y^2+\der z^2}{1+\tfrac{\kappa}{4}(x^2+y^2+z^2)},\quad\quad\kappa=+1,0,-1.\label{r1}\ee
Here $R=R(t)$ is a real function (the scale factor), of the cosmic
time $t$. 
In the following we use an orthonormal coframe $\theta^\mu$,
$\mu=0,1,2,3$, for $g$. This is given by 
\be
\theta^0=\der t,\quad\theta^i=\frac{R\der
  x^i}{1+\tfrac{\kappa}{4}(x^2+y^2+z^2)},\quad
x^i=(x,y,z),\label{cof}\ee
and in it the metric $g$ reads:
$$g=g_{\mu\nu}\theta^\mu\theta^\nu=-{\theta^0}^2+{\theta^1}^2+{\theta^2}^2+{\theta^3}^2.$$

In this letter we observe that each Robertson-Walker spacetime
$(M,g)$, admits a 1-parameter family of metrics $\tilde{g}$, which are not
isometric to $g$, but which have the same set of geodesics as
$(M,g)$. Then we speculate about the consequences of using $\tilde{g}$
rather than $g$ to interpret the cosmological data. In particular, we
show that a pure dust without a cosmological constant in the
Robertson-Walker metric $g$, can be interpreted as a pure dust with
energy momentum tensor of cosmological constant type (dark energy), 
$\tde_{\mu\nu}=-\tfrac{1}{8\pi
  G}\tilde{\Lambda}\tilde{g}_{\mu\nu},$ in the corresponding metric $\tilde{g}$.

To see this we do as follows:

Consider a 1-parameter family of metrics $\tilde{g}$ on $M$ related to
$g$ in (\ref{r1}) by:
\be
\tilde{g}=-\frac{{\theta^0}^2}{(1-s R^2)^2}+ \frac{{\theta^1}^2}{1-s
  R^2}+ \frac{{\theta^2}^2}{1-s R^2}+ \frac{{\theta^3}^2}{1-s R^2},\label{r2}\ee
where $s$ is a real constant. Then we have the following theorem
\begin{theorem}
For each value of the real parameter $s$ the metric (\ref{r2}) has on
$M$ the same unparametrised geodesics as the Roberston-Walker metric (\ref{r1}). 
\end{theorem}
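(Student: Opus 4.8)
The plan is to use the classical Levi-Civita--Weyl criterion for projective equivalence: two torsion-free affine connections $\nabla$ and $\tilde\nabla$ on $M$ have the same unparametrised geodesics if and only if the difference of their Christoffel symbols is of pure-trace type,
\[
\tilde\Gamma^\lambda_{\mu\nu}-\Gamma^\lambda_{\mu\nu}=\delta^\lambda_\mu\,\psi_\nu+\delta^\lambda_\nu\,\psi_\mu,
\]
for some $1$-form $\psi=\psi_\mu\,\der x^\mu$. Since every geodesic of a (pseudo-)Riemannian metric is a geodesic of its Levi-Civita connection, it suffices to verify this identity for the Levi-Civita connections of $g$ and $\tilde g$ and to exhibit the corresponding $\psi$ explicitly.

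First I would write both metrics in the common coordinate system $(t,x^i)$. Setting $f:=1-sR^2$, which depends on $t$ alone, the metric $g$ of (\ref{r1}) is the warped product $g=-\der t^2+R^2\,\bar h$, where $\bar h$ is the constant-curvature spatial metric $\bar h_{ij}\,\der x^i\der x^j=(1+\tfrac{\kappa}{4}(x^2+y^2+z^2))^{-2}(\der x^2+\der y^2+\der z^2)$, while (\ref{r2}) becomes $\tilde g=-f^{-2}\,\der t^2+f^{-1}R^2\,\bar h$. Both Christoffel symbols then follow from the standard warped-product formulas; the only spatial-index data are the common symbols $\bar\Gamma^i_{jk}$ of $\bar h$, and all remaining components involve $R$, $f$ and their $t$-derivatives.

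Next I would subtract the two connections. The spatial blocks agree, $\tilde\Gamma^i_{jk}=\Gamma^i_{jk}=\bar\Gamma^i_{jk}$, and a direct computation yields the nontrivial differences $\tilde\Gamma^0_{00}-\Gamma^0_{00}=-\dot f/f$ and $\tilde\Gamma^i_{0j}-\Gamma^i_{0j}=-\tfrac12(\dot f/f)\delta^i_j$, with the remaining components $\tilde\Gamma^0_{0i}-\Gamma^0_{0i}$ and $\tilde\Gamma^i_{00}-\Gamma^i_{00}$ vanishing. One then checks that these match the pure-trace form above with the single nonzero component $\psi_0=-\tfrac12\,\dot f/f$, that is, with the exact $1$-form $\psi=-\tfrac12\,\der\log|1-sR^2|$.

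The crux of the argument, and the place where the specific exponents in (\ref{r2}) are forced, is the mixed block $\tilde\Gamma^0_{ij}-\Gamma^0_{ij}$. This term is purely spatial and, since $\delta^0_i=\delta^0_j=0$, cannot be absorbed into the pure-trace form; it must therefore vanish identically. Computing it gives $R\dot R(f-1)\,\bar h_{ij}-\tfrac12 R^2\dot f\,\bar h_{ij}$, and substituting $f-1=-sR^2$ and $\dot f=-2sR\dot R$ makes the two contributions cancel exactly. This cancellation is what requires the power of $(1-sR^2)$ in the $\theta^0$ term of (\ref{r2}) to be precisely twice that in the spatial terms; a repetition of the same computation with independent exponents shows that no other power law produces the cancellation. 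This is the main obstacle and the only genuinely nontrivial step; once it is in place the Levi-Civita--Weyl criterion holds for every real $s$, which proves the theorem.
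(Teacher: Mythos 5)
Your proof is correct and takes essentially the same route as the paper: both reduce the theorem to the classical pure-trace (projective equivalence) criterion and verify it by directly computing the two Levi-Civita connections in a common frame, and your $1$-form $\psi=-\tfrac12\,\der\log|1-sR^2|$, i.e.\ $\psi_0=\frac{sR\dot R}{1-sR^2}$, is exactly the paper's $A=\frac{sR\dot R}{1-sR^2}\theta^0$. The only differences are bookkeeping (you use coordinate Christoffel symbols and the warped-product structure, whereas the paper uses connection $1$-forms in the $g$-orthonormal coframe \nn{cof}) and your worthwhile extra observation that the vanishing of $\tilde\Gamma^0_{ij}-\Gamma^0_{ij}$ is the step that forces the particular exponents appearing in \nn{r2}.
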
     
\begin{proof}
It is well known \cite{bde,car,ea,ema,nn,nur} that two metrics $g$ and $\tilde{g}$ have the same
unparametrized geodesics if and only if their respective Levi-Civita connections 
$\nabla$ and $\tilde{\nabla}$ are related via:
$$\tilde{\nabla}_XY=\nabla_XY+A(X)Y+A(Y)X,\quad\quad\quad\forall X,Y\in{\rm T}M$$
with some 1-form $A$ on $M$.   

For our pourposes it is convenient to describe a Levi-Civita 
connection $\nabla$ of a metric $g=g_{\mu\nu}\theta^\mu\theta^\nu$ 
in terms of the connection 1-forms $\Gamma^\mu_{~\nu}$ associated to
the coframe $\theta^\mu$ via:
$$\der\theta^\mu+\Gamma^\mu_{~\nu}\dz\theta^\nu=0,\quad\quad \der
g_{\mu\nu}-\Gamma_{\mu\nu}-\Gamma_{\nu\mu}=0,\quad\quad
\Gamma_{\mu\nu}=g_{\mu\rho}\Gamma^\rho_{~\nu}.$$ In particular we have 
$\Gamma_{\mu\nu}=g(X_\mu,\nabla X_\nu)$, where $X_\mu$ is a frame dual
to $\theta^\nu$, $X_\mu\hook\theta^\nu=\delta^\nu_{~\mu}$.  

In terms of the connection 1-forms the two connections
$\tilde{\nabla}$ and $\nabla$ have the same unparametrised geodesics
iff there exists a coframe $\theta$ and a 1-form $A=A_\mu\theta^\mu$ on
$M$, such that
the corresponding connection 1-forms $\tilde{\Gamma}^\mu_{~\nu}$ and
$\Gamma^\mu_{~\nu}$ are related via:
\be
\tilde{\Gamma}^\mu_{~\nu}=\Gamma^\mu_{~\nu}+\delta^\mu_{~\nu}A+A_\nu\theta^\mu.\label{pt}\ee
in this coframe\footnote{The transformation of Levi-Civita connections
$\Gamma\to\tilde{\Gamma}$ is called a projective transformation. To
  see that two connections which are transformable to each other via
  projective transformations have the same geodesics is very easy: the
connection coeefficients $\tilde{\Gamma}^\mu_{~\nu\rho}$ defined by the
connection 1-forms via
$\tilde{\Gamma}^\mu_{~\nu}=\tilde{\Gamma}^\mu_{~\nu\rho}\theta^\rho$ define the
geodesic equation: $\frac{\der v^\mu}{\der t} +\tilde{\Gamma}^\mu_{~\nu\rho}v^\nu
v^\rho=0$. If we insert
$\tilde{\Gamma}^\mu_{~\nu\rho}=\Gamma^\mu_{~\nu\rho}+\delta^\mu_{~\nu}A_\rho+\delta^\mu_{~\rho}A_\nu$
in this equation we get $\frac{\der v^\mu}{\der t}+\Gamma^\mu_{~\nu\rho}v^\nu
v^\rho=-2 (v\cdot A) v^\mu$, i.e. again a geodesics equation, but now
for connection $\Gamma$ and in a different parametrization.}.

Thus to proof the theorem it is enough to find a common coframe and
$A$ such that the Levi-Civita connection 1-forms for metrics
(\ref{r1}) and (\ref{r2}) satisfy (\ref{pt}) with some $A$. 

It turns out that such a coframe is given by (\ref{cof}). Calculating
the Levi-Civita connection 1-forms $\Gamma^\mu_{~\nu}$ for $g$ as in
(\ref{r1}) in this coframe we find that
\be
\Gamma^\mu_{~\nu}=\bma
0&\frac{\dot{R}\theta^1}{R}&\frac{\dot{R}\theta^2}{R}&\frac{\dot{R}\theta^3}{R}\\&&&\\\frac{\dot{R}\theta^1}{R}&
0&\frac{-\kappa y\theta^1+\kappa x\theta^2}{2R}&\frac{-\kappa z\theta^1+\kappa x\theta^3}{2R}\\&&&\\\frac{\dot{R}\theta^2}{R}&\frac{\kappa y\theta^1-\kappa x\theta^2}{2R}&0&\frac{-\kappa z\theta^2+\kappa y\theta^3}{2R}\\&&&\\\frac{\dot{R}\theta^3}{R}&\frac{\kappa z\theta^1-\kappa x\theta^3}{2R}&\frac{\kappa z\theta^2-\kappa y\theta^3}{2R}&
0
\ema.
\ee
Calculations of $\tilde{\Gamma}^\mu_{~\nu}$ for (\ref{r2}) in this
coframe gives:
\be
\tilde{\Gamma}^\mu_{~\nu}=\bma
\frac{2s
  R\dot{R}\theta^0}{1-sR^2}&\frac{\dot{R}\theta^1}{R}&\frac{\dot{R}\theta^2}{R}&\frac{\dot{R}\theta^3}{R}\\&&&\\\frac{\dot{R}\theta^1}{R(1-sR^2)}&
\frac{sR\dot{R}\theta^0}{1-sR^2}&\frac{-\kappa y\theta^1+\kappa x\theta^2}{2R}&\frac{-\kappa z\theta^1+\kappa x\theta^3}{2R}\\&&&\\\frac{\dot{R}\theta^2}{R(1-sR^2)}&\frac{\kappa y\theta^1-\kappa x\theta^2}{2R}&
\frac{sR\dot{R}\theta^0}{1-sR^2}&\frac{-\kappa z\theta^2+\kappa y\theta^3}{2R}\\&&&\\\frac{\dot{R}\theta^3}{R(1-sR^2)}&\frac{\kappa z\theta^1-\kappa x\theta^3}{2R}&\frac{\kappa z\theta^2-\kappa y\theta^3}{2R}&
\frac{sR\dot{R}\theta^0}{1-sR^2}
\ema.
\ee
It is a matter of checking that the 1-form 
$$A=\frac{sR\dot{R}}{1-sR^2}\theta^0$$ 
is such that (\ref{pt}) holds for $\tilde{\Gamma}^\mu_{~\nu}$ and
$\Gamma^\mu_{~\nu}$. This finishes the proof.
\end{proof}
\begin{remark}
Note that if $s=0$ the metric $\tilde{g}$ coincides with $g$.
Observe also that the metrics $\tilde{g}$, belong to the
Robertson-Walker class for all values of $s$: one can bring them in the form
(\ref{r1}) by an apropriate redefinition of the coordinate $t$
and the function $R$. Thus associated with each Robertson-Walker
metric $g$ is an entire one parameter family of Robertson-Walker
metrics $\tilde{g}$, which includes $g$, and have the property that
all the metrics from this class have the same unparametrised geodesics
on $M$. The metrics $\tilde{g}$, as being Robertson-Walker metrics, are
all conformally flat. However for different values of $s$, such as
e.g. $s=0$ and $s=1$, they are
not isometric: their curvature, totally encoded in the Einstein
tensor, has different properties. 
\end{remark}
Calculation of the curvature $R^\mu_{~\nu\rho\sigma}$ and
$\tilde{R}^\mu_{~\nu\rho\sigma}$ and the Ricci tensors, 
$R_{\nu\sigma}=R^\mu_{~\nu\mu\sigma}$ and 
$\tilde{R}_{\nu\sigma}=\tilde{R}^\mu_{~\nu\mu\sigma}$, for the metrics $g$ and $\tilde{g}$,
still using the same coframe (\ref{cof}), yields the following
proposition.
\begin{proposition}\label{pro}
The respective 
Einstein tensors 
$E_{\mu\nu}=R_{\mu\nu}-\tfrac12Rg_{\mu\nu}$ and
$\tilde{E}_{\mu\nu}=\tilde{R}_{\mu\nu}-\tfrac12\tilde{R}\tilde{g}_{\mu\nu}$,
in coframe (\ref{cof}), read:
$$E_{\mu\nu}=\bma E_{00}&0\\
0&E_{ij}\ema
$$
with
$$
E_{00}=\frac{3(\kappa+\dot{R}^2)}{R^2},\quad\quad
E_{ij}=-\frac{\kappa+\dot{R}^2+2R\ddot{R}}{R^2}\delta_{ij},
$$
and
$$\tilde{E}_{\mu\nu}=\bma \tilde{E}_{00}&0\\
0&\tilde{E}_{ij}\ema,
$$
with
$$
\tilde{E}_{00}=\frac{1}{(1-s R^2)^2}\Big(E_{00}-3s\kappa\Big),\quad\quad
\tilde{E}_{ij}=\frac{1}{1-s R^2}\Big(E_{ij}+s(\kappa+2\dot{R}^2+2R\ddot{R})\delta_{ij}\Big).$$
\end{proposition}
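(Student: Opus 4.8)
The plan is to compute the curvature two-forms from the connection one-forms already displayed in the proof of Theorem~1.1, and then read off the Einstein tensors by a direct, if tedious, contraction. The statement is purely computational: the connection one-forms $\Gamma^\mu_{~\nu}$ and $\tilde\Gamma^\mu_{~\nu}$ are given explicitly in coframe (\ref{cof}), so the curvature two-forms follow from the second structure equation $\Omega^\mu_{~\nu}=\der\Gamma^\mu_{~\nu}+\Gamma^\mu_{~\rho}\dz\Gamma^\rho_{~\nu}=\tfrac12 R^\mu_{~\nu\rho\sigma}\theta^\rho\dz\theta^\sigma$, and the Ricci and Einstein tensors follow by the stated contractions.

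First I would carry out the curvature computation for the untilded metric $g$ to establish the formulas for $E_{00}$ and $E_{ij}$; these are the standard Friedmann expressions, so this step is a check that the coframe conventions in (\ref{cof}) reproduce the known Robertson-Walker curvature. The key intermediate quantities are $\der\theta^i$, which bring in both the $\dot R/R$ factors (from differentiating $R(t)$) and the $\kappa$-dependent spatial terms visible in $\Gamma^\mu_{~\nu}$; one must remember that in the coframe (\ref{cof}) the coordinate differentials $\der x^i$ must be re-expressed through $\theta^i$, and that the spatial position functions $x,y,z$ appearing in $\Gamma^\mu_{~\nu}$ also get differentiated. Next I would repeat the computation for $\tilde g$ using $\tilde\Gamma^\mu_{~\nu}$. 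The crucial structural observation is that $\tilde\Gamma$ differs from $\Gamma$ by the projective term $\delta^\mu_{~\nu}A+A_\nu\theta^\mu$ with $A=\tfrac{sR\dot R}{1-sR^2}\theta^0$, so the difference of the two curvature two-forms is governed by $\der A$ and by $A\dz\Gamma$ cross-terms; this is exactly the projective-change-of-connection formula, and it organizes the computation so that the $s$-dependent corrections $-3s\kappa$ and $s(\kappa+2\dot R^2+2R\ddot R)\delta_{ij}$ appear in a controlled way rather than from an unstructured expansion.

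The main obstacle will be bookkeeping rather than conceptual: correctly tracking the overall conformal-type factors $(1-sR^2)^{-1}$ and $(1-sR^2)^{-2}$ that distinguish $\tilde E_{00}$ from $\tilde E_{ij}$, and keeping the index positions consistent when passing from $\tilde R^\mu_{~\nu\rho\sigma}$ to $\tilde R_{\nu\sigma}$ and then to $\tilde E_{\mu\nu}$, since $\tilde g_{\mu\nu}$ is not the flat Minkowski $\eta_{\mu\nu}$ in this coframe and the trace $\tilde R=\tilde g^{\mu\nu}\tilde R_{\mu\nu}$ therefore carries its own $(1-sR^2)$ factors. The cleanest route is to compute everything in the orthonormal-for-$g$ coframe, extract the frame components, and only at the end reinstate the metric factors appropriate to $\tilde g$; the projective relation (\ref{pt}) guarantees that the two Ricci tensors differ only by the symmetrized derivative of $A$, which localizes the entire discrepancy to the single one-form $A=\tfrac{sR\dot R}{1-sR^2}\theta^0$ and thereby produces the claimed closed-form corrections after substitution.
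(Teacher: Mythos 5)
Your primary plan---compute the curvature two-forms of both metrics from the displayed connection one-forms via the second structure equation, contract to the Ricci tensors, and only at the end reinstate the $\tilde g$ metric factors, since $\tilde g_{\mu\nu}$ is not $\eta_{\mu\nu}$ in the coframe (\ref{cof})---is exactly the paper's route: the paper offers no proof beyond the assertion that this calculation, carried out in the coframe (\ref{cof}), yields the stated formulas. Your warnings about where the $(1-sR^2)$ factors enter (in the trace $\tilde R=\tilde g^{\mu\nu}\tilde R_{\mu\nu}$ and in the term $-\tfrac12\tilde R\tilde g_{\mu\nu}$) correctly identify the real pitfalls of that computation.

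However, the structural shortcut you invoke at the end---that the projective relation (\ref{pt}) ``guarantees that the two Ricci tensors differ only by the symmetrized derivative of $A$''---is false, and relying on it would produce wrong formulas at order $s^2$, which is precisely the order at which the factors $(1-sR^2)^{-2}$ and $(1-sR^2)^{-1}$ in $\tilde E_{00}$ and $\tilde E_{ij}$ are first distinguishable from their linearizations in $s$. Under (\ref{pt}) the curvature two-forms differ by
$$\tilde\Omega^\mu_{~\nu}-\Omega^\mu_{~\nu}=\delta^\mu_{~\nu}\,\der A+\big(\der A_\nu-A_\rho\Gamma^\rho_{~\nu}\big)\dz\theta^\mu+A_\nu A_\rho\,\theta^\mu\dz\theta^\rho,$$
so a quadratic term in $A$ survives, and contracting (in dimension four, with the paper's convention $R_{\nu\sigma}=R^\mu_{~\nu\mu\sigma}$, and using that here $A=-\tfrac12\der\ln(1-sR^2)$ is closed, so $\der A=0$ and $\nabla_\mu A_\nu$ is symmetric) gives
$$\tilde R_{\mu\nu}=R_{\mu\nu}-3\big(\nabla_\mu A_\nu-A_\mu A_\nu\big),$$
not $\tilde R_{\mu\nu}=R_{\mu\nu}-3\nabla_\mu A_\nu$. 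The quadratic term is not negligible: with $A=f\theta^0$, $f=\tfrac{sR\dot R}{1-sR^2}$, one has $A_0A_0=f^2=\tfrac{s^2R^2\dot R^2}{(1-sR^2)^2}\neq0$, and dropping it yields, for instance at $\kappa=0$, $\tilde E_{00}=\tfrac{1}{(1-sR^2)^2}\big(E_{00}-\tfrac32 s^2R^2\dot R^2\big)$ instead of the correct $\tilde E_{00}=\tfrac{E_{00}}{(1-sR^2)^2}$. So either carry out the honest structure-equation computation (your stated main plan, which works and coincides with the paper's), or use the projective change-of-Ricci formula with its quadratic terms included; the version you wrote down would fail. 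A further small point: the symmetry of $\tilde R_{\mu\nu}-R_{\mu\nu}$ is itself not automatic from (\ref{pt}) but holds here only because this particular $A$ is closed.
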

Now we assume that the metric $g$ satisfies the Einstein equations 
\be
E_{\mu\nu}=8\pi G T_{\mu\nu},\label{ee}\ee
where $T_{\mu\nu}=\rho u_\mu u_\nu$ is the energy-momentum tensor of
pure dust with energy density $\rho$ and the 4-velocity $u=u^\mu
X_\mu$, orthogonal to the hypersurfaces $t=const$. This in particular 
means that in the frame $X_\mu$ dual to the coframe (\ref{cof}) we
have $$u^\mu=(1,0,0,0),$$ so that the Einstein equations (\ref{ee})
are:
\be\begin{aligned}
&E_{00}=\frac{3(\kappa+\dot{R}^2)}{R^2}=8\pi G\rho\\
&E_{ij}=-\frac{\kappa+\dot{R}^2+2R\ddot{R}}{R^2}\delta_{ij}=0.
\end{aligned}\label{fr1}\ee
Each solution to these equations satisfies the Friedmann equation
\be
\dot{R}^2=\frac{2GM}{R}-\kappa,\label{fr2}\ee
with a constant $M=\frac43\pi\rho R^3$. From now on we assume the
equations (\ref{fr1})-(\ref{fr2}) to be satisfied. 

Thus we have a Friedmann-Robertson-Walker Universe $(M,g)$ filled with
the comoving dust with 4-velocity $u$. 

Now if we forget about the parametrization of geodesics in this
Universe, and would like to reconstruct the metric from the analysys
of unparametrized geodesics we would equally use any metric
$\tilde{g}$ with whathever value of the parameter $s$. But if we
decided to use a metric $\tilde{g}$ with $s\neq 0$ we would noticed
that now our Universe satsifies quite a different Einstein equations
than these in (\ref{ee}). 

This is because of the folllowing line of arguments:

The vector field $u$ is not anymore a unit vector field in the metric
$\tilde{g}$. Actually $\tilde{g}(u,u)=-\frac{1}{(1-sR^2)^2}$. So
obviously we can not use $u$ as the 4-velocity of the fluid in the
metric $\tilde{g}$. Instead of $u$ we now take a rescalled vector
field 
$$\tilde{u}=(1-sR^2)u,$$ which at each point is in the direction of
$u$ and has a unit norm, $\tilde{g}(\tilde{u},\tilde{u})=-1$, in the metric $\tilde{g}$. Surprisingly $\tilde{g}$ with
such $\tilde{u}$ satisfies the Einstein equations with energy
momentum tensor being a sum of the energy momentum tensor of a 
dust moving along $\tilde{u}$ and the energy momentum of the
  cosmological constant type $\tde_{\mu\nu}=-\tfrac{1}{8\pi
  G}\tilde{\Lambda}\tilde{g}_{\mu\nu}$. More precisely we have the
  following theorem.
\begin{theorem}
Consider Robertson-Walker metrics $g$ as in (\ref{r1}) and
$\tilde{g}$ as in (\ref{r2}). If $g$ satisfies the Friedmann
equations (\ref{fr1})-(\ref{fr2}) for the pure dust moving with the 4-velocity
$u$ in $M$, and having the energy density in the comoving frame
equal to $\rho$, then the metric $\tilde{g}$,
which in $M$ has the same unparametrized geodesics as $g$,
satisfies the 
Einstein equations 
\be
\tilde{E}_{\mu\nu}+\tilde{\Lambda}\tilde{g}_{\mu\nu}=8\pi
G\tilde{T}_{\mu\nu}\label{eei}\ee
for a pure dust,
$\tilde{T}_{\mu\nu}=\tilde{\rho}\tilde{u}_\mu\tilde{u}_\nu$, 
with 4-velocity $\tilde{u}=(1-sR^2)u$, the energy density 
$$\tilde{\rho}=\rho+\frac{s}{8\pi G}\big(\frac{2GM}{R}-4\kappa\big),$$
and the cosmological `constant'
\be
\tilde{\Lambda}=s\big(\kappa-\frac{2GM}{R}\big).\label{cc}\ee
\end{theorem}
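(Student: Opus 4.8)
The plan is to verify the Einstein equations (\ref{eei}) directly, componentwise in the coframe (\ref{cof}), using Proposition \ref{pro} to supply $\tilde{E}_{\mu\nu}$ and the Friedmann relations (\ref{fr1})--(\ref{fr2}) to eliminate $\ddot{R}$ and $\dot{R}^2$. All of the curvature computation is already packaged in Proposition \ref{pro}, so what remains is algebra organized around the two distinct diagonal blocks of the tensors involved. First I would record the right-hand side: since $u^\mu=(1,0,0,0)$ and $\tilde{u}=(1-sR^2)u$, lowering the index with $\tilde{g}$ from (\ref{r2}) gives $\tilde{u}_0=-\tfrac{1}{1-sR^2}$ and $\tilde{u}_i=0$, hence $\tilde{T}_{00}=\tilde{\rho}\,(1-sR^2)^{-2}$ while $\tilde{T}_{ij}=0$. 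Because $\tilde{E}_{\mu\nu}$ and $\tilde{g}_{\mu\nu}$ are both diagonal, the combination $\tilde{E}_{\mu\nu}+\tilde{\Lambda}\tilde{g}_{\mu\nu}$ is diagonal as well, so (\ref{eei}) splits into one scalar equation in the $00$-slot and one scalar equation (identical for each $i$ by isotropy) in the spatial slot.

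I would treat the spatial equations first, since they carry no matter source. With $\tilde{T}_{ij}=0$ and $E_{ij}=0$ from (\ref{fr1}), the $ij$-component of (\ref{eei}) reduces, after cancelling the common factor $\tfrac{1}{1-sR^2}\delta_{ij}$, to the condition $s(\kappa+2\dot{R}^2+2R\ddot{R})+\tilde{\Lambda}=0$. Using $\kappa+\dot{R}^2+2R\ddot{R}=0$ (again from (\ref{fr1})) to trade $2R\ddot{R}$ for $-\kappa-\dot{R}^2$, and then (\ref{fr2}) to replace $\dot{R}^2$ by $\tfrac{2GM}{R}-\kappa$, this collapses to exactly $\tilde{\Lambda}=s\big(\kappa-\tfrac{2GM}{R}\big)$, which is (\ref{cc}). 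Thus the demand that $\tilde{g}$ admit a source-free spatial, i.e.\ \emph{pure}-dust, interpretation is precisely what forces the cosmological-constant term and pins down its value.

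With $\tilde{\Lambda}$ determined, the $00$-equation of (\ref{eei}) becomes a definition of $\tilde{\rho}$: using $\tilde{E}_{00}=(1-sR^2)^{-2}(E_{00}-3s\kappa)$, $\tilde{g}_{00}=-(1-sR^2)^{-2}$, and $E_{00}=8\pi G\rho$, I would clear the common factor $(1-sR^2)^{-2}$ to obtain $E_{00}-3s\kappa-\tilde{\Lambda}=8\pi G\tilde{\rho}$, and then substitute the value of $\tilde{\Lambda}$ just found; this reproduces $\tilde{\rho}=\rho+\tfrac{s}{8\pi G}\big(\tfrac{2GM}{R}-4\kappa\big)$ as claimed.

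Since Proposition \ref{pro} has already absorbed the genuine labor, there is no real analytic obstruction; the only point requiring care is the structural observation of the second paragraph. What must be checked is that killing the isotropic spatial part with a \emph{single} scalar $\tilde{\Lambda}$ automatically leaves a tensor of the pure-dust form $\tilde{\rho}\,\tilde{u}_\mu\tilde{u}_\nu$, so that no further consistency condition is imposed beyond the definition of $\tilde{\rho}$. This is guaranteed here precisely because $\tilde{E}_{\mu\nu}$ and $\tilde{g}_{\mu\nu}$ are diagonal and spatially isotropic, so that the three spatial equations coincide and the residual after the $\tilde{\Lambda}$-subtraction lives entirely in the $00$-slot, which is exactly where $\tilde{u}_\mu\tilde{u}_\nu$ is supported.
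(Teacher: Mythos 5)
Your proposal is correct and follows essentially the same route as the paper: both use Proposition \ref{pro} to reduce (\ref{eei}) to the two diagonal blocks, solve the matter-free $ij$-equation first (eliminating $\ddot{R}$ and $\dot{R}^2$ via (\ref{fr1})--(\ref{fr2})) to obtain (\ref{cc}), and then read off $\tilde{\rho}$ from the $00$-equation. Your explicit computation of $\tilde{u}_\mu$ and $\tilde{T}_{00}=\tilde{\rho}(1-sR^2)^{-2}$, and the closing remark on why a single scalar $\tilde{\Lambda}$ suffices, merely make explicit steps the paper leaves implicit.
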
       
\begin{proof}
We use Proposition \ref{pro}. According to it the nonvanishing
components of the Einstein equations (\ref{eei}) are the diagonal
ones: $\{00\}$ and $\{ij\}$. The $\{00\}$ component gives:
$$E_{00}-3s\kappa-\tilde{\Lambda}=8\pi G\tilde{\rho},$$
and the $\{ij\}$ components give:
$$E_{ij}+s(\kappa+2\dot{R}^2+2R\ddot{R})\delta_{ij}+\tilde{\Lambda}\delta_{ij}=0.$$  
Inserting in these equations the values of $E_{00}$ and $E_{ij}$ from
(\ref{fr1}) we get:
\be
\begin{aligned}
&8\pi G\rho-3s\kappa-\tilde{\Lambda}=8\pi G\tilde{\rho}\\
&s(\kappa+2\dot{R}^2+2R\ddot{R})+\tilde{\Lambda}=0.\end{aligned}\label{uu}
\ee
Now we insert the value 
of $\ddot{R}$ from the second equation (\ref{fr1}) and the value of 
$\dot{R}$ from the Friedman equation (\ref{fr2}) in the second
equation (\ref{uu}). After a simple algebra this proves the formula
(\ref{cc}) for $\tilde{\Lambda}$. Inserting this in the first of
equations (\ref{uu}) proves the formula for $\tilde{\rho}$. This finishes the proof.
\end{proof}
Using this theorem we address the following issue: 
\begin{remark}
Since the measurments in cosmology are based on observations of
photons, other elementary particles, or massive bodies, and since all
of them move along geodesics, it is not clear why, based only on 
observations of geodesics, astronomers, decide to use the
Robertson-Walker metric $g$ to interpret their data. According to 
our analysis they can equally use any metric $\tilde{g}$ with any
value of the parameter $s$, because in all of these metrics the geodesics
look the same: whatever choice of $s$ in $\tilde{g}$ we make 
the Universe is always identified with the 
same manifold $M$, and the geodesics, i.e. the trajectories of all
particles and massive bodies, are the same for all of these choices. 
But if we accept that we can use the metrics
$\tilde{g}$ with $s=0$ and $s\neq 0$ on equal footing, we encounter
the problem what is really the energy content of the Universe. In
particular the celebrated notion of the dark energy becomes
meaningless in such case: the dark energy content is absent in the metric
$\tilde{g}$ with $s=0$ and present in the metric $\tilde{g}$ with
$s\neq 0$.   
\end{remark}

\noindent
{\bf Acknowledgements} I wish to thank Vladimir Matveev for inspiration.

\end{document}